\def\umono{\ar@{_{(}->}[u]}
\def\uumono{\ar@{_{(}->}[uu]}
\def\lmono{\ar@{_{(}->}[l]}
\def\llmono{\ar@{_{(}->}[ll]}
\newcommand{\Z}{{\mathbb Z}}
\newcommand{\Hom}[2]{\operatorname{Hom}(#1,#2)}
\newcommand{\A}{\ifmmode{\mathcal{A}}\else${\mathcal{A}}$\fi}
\newcommand{\K}{\ifmmode{\mathcal{K}}\else${\mathcal{K}}$\fi}
\newcommand{\U}{\ifmmode{\mathcal{U}}\else${\mathcal{U}}$\fi}
\newcommand{\T}{\ifmmode{\mathcal{T}}\else${\mathcal{T}}$\fi}
\newcommand{\FF}{\ifmmode{\mathcal{F}}\else${\mathcal{F}}$\fi}
\newcommand{\LL}{\ifmmode{\mathcal{L}}\else${\mathcal{L}}$\fi}
\newtheorem{theorem}{Theorem}[section]
\newtheorem{proposition}[theorem]{Proposition}
\newtheorem{corollary}[theorem]{Corollary}
\newtheorem{lemma}[theorem]{Lemma}
\newtheorem{definition}[theorem]{Definition}
\newtheorem{remark}[theorem]{Remark}
\newtheorem{example}[theorem]{Example}
\title[Conditionally flat functors]{Conditionally flat functors on spaces and groups}
\author{Emmanuel Dror Farjoun}
\address{Einstein Institute of Mathematics\\
Hebrew University of Jerusalem (Gig'at Ram)\\
Jerusalem 91904, Israel}
\email{farjoun@math.huji.ac.il}
\author{J\'er\^{o}me Scherer}
\address{EPFL \\ SB MATHGEOM\\ Station 8, MA B3 455
CH-1015 Lausanne, Switzerland}
\email{jerome.scherer@epfl.ch}
\thanks{The second author was supported by FEDER/MEC grant MTM2007-61545}
\subjclass[2000]{Primary 55R05, 20E22; Secondary 55P60, 55P65, 55R70, 20E10, 20F14}
\date{\today}
\begin{document}


\begin{abstract}
Consider an extension of groups $1 \rightarrow K \rightarrow G \rightarrow Q \rightarrow 1$ which enjoys the
property that the quotient by the lower central series $\Gamma_{c+1}$ produces another extension
$1 \rightarrow K/ \Gamma_{c+1} K \rightarrow G /\Gamma_{c+1} G \rightarrow Q /\Gamma_{c+1} Q \rightarrow 1$,
of nilpotent groups of class~$c$. We say that the extension is $\Gamma_{c+1}$-flat.
Let us pull back the original extension along any homomorphism $X \rightarrow G$.
Does the pullback extension enjoy the same $\Gamma_{c+1}$-flatness property?

To answer this question we consider not only quotients by the lower central series, but any localization
functor in the category of groups. In fact we start by studying the analogous question for spaces, where we
replace extensions by fibration sequences. We prove that the only homotopical localization functors which
behave well under pull-backs are nullifications. In the category of groups, nullifications also enjoy this
property, and so do all epireflections arising from a variety of groups. In particular the answer to the 
question about the nilpotent quotients is positive.
\end{abstract}


\maketitle


\section*{Introduction}
\label{sec intro}
This work originates in the following question. Consider a fibration
$F \to E \to B$ and pull it back along a map $X \to B$. Can this new fibration $F \to P \to X$  be
``more complicated" than the original one? Often the answer is negative, and
indeed, pulling back fibrations (or extensions of groups) can only simplify them from several  points of view.
For example,  if the first fibration
has  a section, or induces a fibration on the $n$-th Postnikov stage, then so would one obtained by a pull-back along any map.

The  properties of fibrations and short exact sequences we consider in this article are related to (homotopical) localization.
Given a specific localization functor $L$, we are interested in the  ``flatness" property of a fibration  $F\to E\to B$ namely, 
that of being  preserved as such by this
localization functor $L$. This was  considered  to some extend in \cite{MR1997044}. So we say that a fibration sequence $F \to E \to B$ over
a connected base space is \emph{$L$-flat} if the sequence $LF \to LE \to LB$ is also a homotopy fibration sequence.
A classical example is the fibre lemma of Bousfield and Kan in \cite{MR51:1825}.
This lemma asserts  the preservation of principal fibrations with a connected fibre by the homological completion functors $R_\infty$,
for any commutative ring $R$. To which extent localization functors preserve principal fibrations is also the subject of \cite{MR2581908}.
More generally, Bousfield, \cite{MR1257059}, the first author and Smith, \cite{MR1318881}, analyzed the ``error term" calculating the
failure of flatness.


\medskip

In general however nice properties are not preserved under pullback.  We exhibit elementary counter-examples
in homotopy theory and group theory, see Example~\ref{ex:homology} and Theorem~\ref{thm:epireflection}.
For homotopical  localization functors   $L=L_f$  one can understand the situation
as follows. A functor  $L$ is said to be \emph{conditionally flat} if any
pull back of an $L$-flat fibration is again $L$-flat. One direction of the following result has been shown in~\cite{MR1997044},
and many ideas used here are explicitly or implicitly present in that article.

\medskip

\noindent{\bf Theorem~\ref{thm:nullification}.}
{\it 
A homotopy localization functor $L$ is conditionally flat if and only if $L$ is a nullification functor $P_A$ for some space $A$.
}

\medskip

In other words,  if we can conclude that, given an $L$-flat fibration sequence, so is any pullback
of this fibration, then the localization functor $L$ must be a nullification functor and, in that case, any pullback of an
$L$-flat fibration sequence is $L$-flat. The nullification functor $P_A$ kills $A$, and all spaces constructed from
$A$ by push-outs, wedges, telescopes, and extensions by fibrations, \cite{MR97i:55023}. Typical examples are Postnikov sections and
Quillen's plus-construction, \cite{Dror}.

\medskip

We then  turn to group theory where one replaces fibration sequences by group extensions, namely
by short exact sequences  $1\to K \to E \to G\to 1$ and consider again  flatness of localization functors in relation to  pullbacks 
along  group homomorphisms $H\to G$. It turns out that the situation is more involved here  and 
in particular the answer is more interesting since there are localization functors which are not nullification functors  
for which $L$-flatness is preserved by pullbacks of short exact sequences.

This is  easily  seen to be the case for any right exact functor such as  the abelianization  functor $G\to G_{ab}$.  In fact, consider
the quotient $L_cG = G/\Gamma_c(G)$ by the $c$-th term in the lower central series, turning a group $G$ into a nilpotent one
of some fixed class $c$. Flatness with respect to this functor  is a property which behaves well with taking pull-backs. 
We prove in fact that any localization defined by a variety of groups, \cite{MR0215899}, shares this feature.

\medskip

\noindent{\bf Theorem~\ref{thm:variety}.}
{\it 
Let $\mathcal W$ be any variety of groups. The asssociated localization functor $L$ in the category of groups
is then right exact and thus conditionally flat.
}

\medskip

The  classifying space construction yields fibrations of spaces and translates the question into the homotopy category.
There are thus homotopical localization functors which are not nullification functors, but nevertheless preserve \emph{certain}
$L$-flat fibration sequences under pull-backs. This is shortly discussed in Remark~\ref{rem:classify}.

\medskip

\noindent {\bf Organization and content}
The rest of the paper is organized as follows:  The first section  gives  basic definitions and notations. 
The second contains  the main result about fibration sequences and their conditional preservation by functors. 
In the third section flatness of functors on groups is considered, here right exact functors are shown to be conditionally flat. 
The last section deals with  (counter-)examples and possible further developments.

\medskip

\noindent {\bf Acknowledgements.} This work started when the first
author visited the EPFL in Lausanne and the facilitation of this working visit was greatly appreciated.

\section{Notation and terminology}
\label{sec:notation}
We are interested in properties of fibration sequences of pointed 
spaces (or  
simplicial sets) and extensions of groups. As they share
many common features we will introduce some terminology which applies to both settings.

We will work with homotopy localization functors $L$ in the sense of Bousfield, see \cite{MR57:17648} and \cite{Dror}, 
in the category of  pointed spaces or groups. In practice
we fix a map $f$ of spaces or groups and consider the localization functor $L_f$ which inverts $f$, \cite{Dror}.
Instead of defining $L$-flatness only for fibrations as we did in the introduction for simplicity, we do it for any map.

\begin{definition}
\label{def:Lflat}
{\rm Let $L$ be a homotopy  functor on spaces. A map $E \to B$ is $L$-\emph{flat} if the canonical comparison map
$L Fib(E \to B) \to Fib(LE \to LB)$ is a weak equivalence.
}
\end{definition}

Equivalently, a fibration sequence $F \rightarrow E \rightarrow B$ is $L$-flat if and only if
$LF \rightarrow LE \rightarrow LB$ is again a fibration sequence. It will be convenient sometimes
to work with fibrations rather than with maps of which we have to take the homotopy fiber.
The same terminology applies to groups, but only for extensions. 
Hence, a group extension $1 \rightarrow N \rightarrow E \rightarrow G \rightarrow 1$ 
is $L$-flat if $1 \rightarrow LN \rightarrow LE \rightarrow LG \rightarrow 1$ is again an extension of groups.

\begin{example}
\label{ex:Pflat}
{\rm Let $P$ be a nullification functor, i.e. $P = L_f$ for $f: A \rightarrow *$. Then any map $E \rightarrow B$
over a $P$-local base space $B$ is $P$-flat, \cite[Corollary~D.3]{Dror}.
}
\end{example}

Most maps are not $L$-flat for a given localization functor $L$. The question we ask is about the preservation
of flatness under base change, that is, if we happen to work
with an $L$-flat map, we ask whether the map obtained by pulling back along an arbitrary map to the base
is $L$-flat again.

\begin{definition}
\label{def:condiflat}
{\rm The map $E \rightarrow B$ is \emph{fully  $L$-flat} if
all its  pullbacks are $L$-flat. A functor  $L$ is \emph{conditionally flat} if any
pull back of an $L$-flat map is again $L$-flat, i.e. if any $L$-flat map is fully
$L$-flat.
}
\end{definition}

Thus full $L$-flatness  refers always to  a map  and  means both  its $L$-flatness (because one can choose to pull-back along
the identity map) and the  $L$-flatness of any  of its pullbacks. Namely, for any map $B'\to B$ the map 
$E \times_B B' \rightarrow B'$ is $L$-flat. As mentioned in the introduction, the fibre lemma of Bousfield and Kan
states that all principal fibrations with  a connected fibre-group are $R_\infty$-fully flat.

\medskip

The main players here are  not the various maps we consider but rather
the  functors $L$. Conditional flatness refers to functors:  In general, localization functors  are not flat, i.e. they
do not preserve all fibration sequences but often, if a map is $L$-flat, then so is any pull-back.
This is a property of the functor $L$ which we call here ``conditionally flat."

\medskip

We will use the same terminology for group extensions and group theoretic localization functors. In fact the above definitions
make sense not only for localization functors, but arbitrary endofunctors (cellularization functors for example, but possibly also
non-idempotent ones like the James construction $JX \simeq \Omega \Sigma X$, or the infinite symmetric product $SP^\infty X$).

\section{ Localization of  fibration sequences}
\label{sec:fibrations}
We extend in this section the results of Berrick and the first author in~\cite{MR1997044}.
It was shown there that nullification functors are always conditionally flat and we prove now
that, in fact, a homotopy localization functor $L$ is conditionally flat if and only if it is a nullification
functor. Parts of our arguments resemble those in~\cite{MR1997044}, but we prefer to include a
complete proof because we will follow the precise same steps in the next section for group theoretic localizations.

\begin{example}
\label{ex:Postnikov}
{\rm We choose for $f$ the map collapsing a sphere $S^{n+1}$ to a point, so that the localization $X \to L_f X = P_{S^{n+1}} X$ is homotopy equivalent
to taking the $n$-th Postnikov stage $X \to X[n]$. Saying that a fibration sequence $F \to E \to B$ is ``$n$-Postnikov-flat" amounts to
saying that the connecting homomorphism $\pi_{n+1} B \to \pi_n F$ is trivial. Therefore, if $F \to E \to B$ is $n$-Postnikov-flat,
so is any pullback fibration sequence $F \to E \times_B X \to X$, for any map $X \to B$.

A typical example of a fibration sequence which is not $n$-Postnikov flat is the path-loop fibration on a non $(n+1)$-connected
space such as $S^{n+1}$. The fibration $\Omega S^{n+1} \rightarrow P S^{n+1} \rightarrow S^{n+1}$ is not
$n$-Postnikov flat,   since applying Postnikov
sections destroys the exactness of the sequence of homotopy groups of the spaces involved.}
\end{example}

However, the Postnikov section functors, as well as all nullification functors (and only these!), are  conditionally flat.

\begin{theorem}
\label{thm:nullification}
A homotopy localization functor $L$ is conditionally flat if and only if $L$ is a nullification functor $P_A$ for some space $A$.
\end{theorem}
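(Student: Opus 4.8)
The plan is to prove the two implications separately. For the easy direction, suppose $L = P_A$ is a nullification functor. Given a $P_A$-flat fibration sequence $F \to E \to B$ and any map $X \to B$, form the pullback $F \to E \times_B X \to X$. The key observation is that the fiber is unchanged, so we already know $P_A F \to P_A E \to P_A B$ is a fibration sequence; we must show the same for the pulled-back version over $X$. The standard trick (as in~\cite{MR1997044}) is to replace $B$ by $P_A B$ and work over the local base: one checks that $P_A(E \times_B X)$ depends only on the data after composing with $B \to P_A B$, using that $F = \hofib(E \to B) \simeq \hofib(P_A E \to P_A B)$ together with Example~\ref{ex:Pflat}, which says any map over a $P_A$-local base is automatically $P_A$-flat. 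So the real content is the forward direction.

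For the hard direction, assume $L = L_f$ is conditionally flat; we must produce a space $A$ with $L_f = P_A$. The criterion I would use is Bousfield's characterization: $L$ is a nullification functor if and only if the class of $L$-acyclic spaces (those $C$ with $LC \simeq *$) is closed under the formation of extensions by fibrations, equivalently $L$ preserves the property of having a fiber that is $L$-acyclic — concretely, $L$ is a nullification iff whenever $F \to E \to B$ is a fibration with $LF \simeq *$ then $LE \to LB$ is an $L$-equivalence. So the strategy is: take the universal $L$-acyclic space $A := \overline{W} \overline{W} \cdots$ —more precisely let $A$ be $\Sigma \bar{C}$ or simply take $A = $ the fiber $\hofib(X \to L_f X)$ for a suitable $X$, or most cleanly, set $A$ to be the (pointed, connected) space such that $L_f$-acyclics coincide with $P_A$-acyclics. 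The concrete route: pick the canonical fibration $\overline{W} = \hofib(* \to BL\text{-stuff})$; better, let $C = \hofib(S^1 \to L_f S^1)$ or use the generating acyclic $A_f = C_f$, the homotopy cofiber of $f$, and aim to show $L_f = P_{\Sigma C}$ where $C$ is a specific $L_f$-acyclic space. Then use conditional flatness to upgrade "closed under products/wedges/telescopes" (automatic for any localization) to "closed under extensions by fibrations."

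Here is the mechanism for that upgrade, which is the crux. Suppose $F \to E \to B$ is a fibration with $LF \simeq *$; I want $LE \simeq LB$. Consider the path-loop fibration $\Omega B \to PB \to B$, which is always $L$-flat in the trivial sense that... no — instead, apply conditional flatness to a well-chosen \emph{fully} $L$-flat fibration and pull back. The idea present in~\cite{MR1997044}: the fibration $\hofib(E \to LE) \to E \to LE$ has $L$-acyclic fiber and $L$-local base, hence (by Example~\ref{ex:Pflat}) is $L$-flat; show moreover it is \emph{fully} $L$-flat because any pullback still has $L$-local base and one must only check the fiber stays $L$-acyclic — and this last point is exactly what forces the acyclics to be closed under extensions once we also pull back suitable fibrations with acyclic total space. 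Running this: given $F \to E \to B$ with $F$ $L$-acyclic, pull back $E \to LE$ — no, pull back the universal fibration along $B \to LB$... The cleanest incarnation: the map $E \to B$ itself, when $F$ is $L$-acyclic, is $L$-flat over the local base $LB$ after localizing, and conditional flatness applied to the fibration $LF' \to LE \to LB$ (for $F'$ a replacement) lets us descend.

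\begin{proof}[Proof sketch]
One implication is contained in~\cite{MR1997044}: if $L = P_A$, then given a $P_A$-flat fibration $F \to E \to B$ and a map $X \to B$, one replaces $B$ by $P_A B$. Since $P_A F \to P_A E \to P_A B$ is a fibration sequence and $P_A$ commutes with the relevant homotopy pullback over the $P_A$-local base $P_A B$ (every map over a $P_A$-local base being $P_A$-flat by Example~\ref{ex:Pflat}), one deduces that $P_A F \to P_A(E \times_B X) \to P_A X$ is again a fibration sequence.

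For the converse, assume $L = L_f$ is conditionally flat. By Bousfield's criterion it suffices to show that the class of $L$-acyclic spaces is closed under extensions by fibrations; then $L = P_A$ with $A$ any space whose $P_A$-acyclics are the $L$-acyclics, e.g.\ the homotopy cofiber of $f$ up to suspension. So let $F \to E \to B$ be a fibration sequence with $F$ connected and $LF \simeq *$; we must show $LE \to LB$ is an $L$-equivalence. Localize the base: the fibration $F' \to E' \to LB$ obtained by pulling $LE \to LB$ — rather, consider $G \to E \to LB$ where this is the composite $E \to B \to LB$ made into a fibration. Its base is $L$-local, so by Example~\ref{ex:Pflat} it is $L$-flat; I claim it is moreover fully $L$-flat, because for any $Y \to LB$ the pullback still has $L$-local base, and its fiber is the same $G$, while $LG$ fits in $LF \to LG \to L(\hofib(B \to LB))$ — one checks $G$ remains $L$-acyclic using that $\hofib(B \to LB)$ is $L$-acyclic and $F$ is. Pulling back along $B \to LB$ recovers (up to the acyclic correction) the original $F \to E \to B$ and the $L$-flatness of this pullback, combined with $LF \simeq *$, yields $LE \simeq LB$. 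The main obstacle is precisely this verification that the relevant fibration over $LB$ is \emph{fully} flat, i.e.\ that $L$-acyclicity of the fiber is stable under the base changes involved — this is where conditional flatness of $L_f$ is genuinely used, and it mirrors step-by-step the argument we will give for groups in the next section.
\end{proof}
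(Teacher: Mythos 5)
Your proof of the hard direction rests on a false criterion. You assert that $L$ is a nullification if and only if the class of $L$-acyclic spaces is closed under extensions by fibrations, ``equivalently'' iff every fibration $F\to E\to B$ with $LF\simeq *$ induces an $L$-equivalence $E\to B$. Neither condition characterizes nullifications: ordinary homological localization $L_{H\mathbb Z}$ satisfies both of them. Indeed, an $H\mathbb Z$-acyclic space is connected, and the Serre spectral sequence with local coefficients shows that a fibration with connected, integrally acyclic fibre induces an isomorphism on integral homology, so $E\to B$ is an $L_{H\mathbb Z}$-equivalence and the acyclics are closed under such extensions; yet $L_{H\mathbb Z}$ is not a nullification and is not even conditionally flat (Example~\ref{ex:homology}). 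The criterion that actually does the work is that $L_f$ is a nullification if and only if $\hofib(X\to L_fX)$ is $L_f$-acyclic for every $X$ \cite[Theorem~2.1]{MR1997044}, and the paper proves the implication by contraposition with a concrete counterexample: if some $\overline{L}B=\hofib(B\to LB)$ is not acyclic, the path--loop fibration $\Omega(LB)\to P(LB)\to LB$ is a fibration of $L$-local spaces, hence $L$-flat, but its pullback along $\eta_B$ is $\Omega(LB)\to \overline{L}B\to B$, which is not $L$-flat because the composite $\overline{L}B\to B\to LB$ is null while $L\overline{L}B\not\simeq *$. Your ``mechanism for the upgrade'' never arrives at any such construction; it is a chain of retracted attempts that ends by conceding the key verification is missing.

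The easy direction has the right skeleton (fiberwise localization to make the fibre local, then factor $X\to B$ through $\eta_X\colon X\to LX$, as in Lemmas~\ref{lem:fiberwise}--\ref{lem:localizationmap}), but the step you defer to ``$P_A$ commutes with the relevant homotopy pullback over the local base'' is exactly where the content lies, and Example~\ref{ex:Pflat} does not supply it: the pulled-back fibration $F\to Q\to X$ has base $X$, which is not local. What is needed is precisely that $\hofib(X\to P_AX)$ is $P_A$-acyclic, so that $Q\to E$ (the pullback of $\eta_X$ along $E\to P_AX$) is a $P_A$-equivalence by \cite[Theorem~1.H.1]{Dror}; this gives $P_AQ\simeq E$ and hence $P_A$-flatness of $F\to Q\to X$. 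That this acyclicity is available exactly for nullifications, and fails otherwise, is what makes the statement an equivalence.
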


The proof will be given at the end of the section.
We start with a few reduction steps. The first one allows us to work with maps having local homotopy fibers. Recall
that $L$ is conditionally flat if pulling back any $L$-flat map produces another $L$-flat map.

\begin{lemma}
\label{lem:fiberwise}
Let $L$ be a homotopy localization functor and assume that  any $L$-flat map with
$L$-local homotopy fiber is fully $L$-flat. Then $L$ is conditionally flat.
\end{lemma}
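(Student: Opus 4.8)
The plan is to reduce the statement to the lemma's hypothesis by replacing an arbitrary $L$-flat map by its \emph{fiberwise localization}, following the strategy of Berrick and the first author in~\cite{MR1997044}. So let $E \to B$ be an $L$-flat map with homotopy fiber $F$; we may assume $B$ connected, arguing over each component otherwise. The construction of fiberwise $L_f$-localization from~\cite{Dror} produces a fibration sequence $LF \to \bar E \to B$ together with a map $E \to \bar E$ over $B$ which on fibers is the localization map $F \to LF$. I will use two properties of this construction, both established in~\cite{Dror}: first, $E \to \bar E$ is itself an $L$-equivalence, since $\bar E$ is built from $E$ by fiberwise attachment of ``$f$-cells''; second, fiberwise localization is compatible with base change, so that for every $g \colon X \to B$ the pullback $\bar E \times_B X \to X$ is, by the uniqueness of fiberwise localization, a fiberwise localization of the pullback $E \times_B X \to X$ (its fiber is the $L$-local space $LF$, and the comparison map from $E \times_B X$ is the localization fiberwise). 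In particular $E \times_B X \to \bar E \times_B X$ is again an $L$-equivalence.

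The first step is to observe that $\bar E \to B$ is $L$-flat. Naturality of the comparison map $L\,\mathrm{Fib}(-) \to \mathrm{Fib}(L(-))$, applied to the evident map from the fibration sequence $F \to E \to B$ to $LF \to \bar E \to B$, yields a commutative square whose horizontal arrows are $L\,\mathrm{Fib}(E \to B) \to \mathrm{Fib}(LE \to LB)$ and $L\,\mathrm{Fib}(\bar E \to B) \to \mathrm{Fib}(L\bar E \to LB)$. The top arrow is an equivalence because $E \to B$ is $L$-flat; the left vertical arrow is $L$ applied to the $L$-equivalence $F \to LF$, hence an equivalence; and the right vertical arrow is induced by the equivalence $LE \to L\bar E$ over $\mathrm{id}_{LB}$, hence an equivalence. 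So the bottom arrow is an equivalence, that is, $\bar E \to B$ is $L$-flat. Its homotopy fiber $LF$ being $L$-local, the hypothesis of the lemma applies: $\bar E \to B$ is fully $L$-flat, so $\bar E \times_B X \to X$ is $L$-flat for every $g \colon X \to B$.

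It remains to transport $L$-flatness back to $E$. Running the same comparison of fibration sequences in the other direction, for the evident map from $F \to E \times_B X \to X$ to $LF \to \bar E \times_B X \to X$, one obtains a square in which the bottom arrow is an equivalence (full $L$-flatness of $\bar E \to B$), the left vertical is once more $L$ applied to $F \to LF$, hence an equivalence, and the right vertical is induced by the $L$-equivalence $L(E \times_B X) \to L(\bar E \times_B X)$ over $\mathrm{id}_{LX}$, hence an equivalence; therefore the top arrow $L\,\mathrm{Fib}(E \times_B X \to X) \to \mathrm{Fib}(L(E\times_B X) \to LX)$ is an equivalence. Thus every pullback of $E \to B$ is $L$-flat, which is exactly conditional flatness of $L$. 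Once fiberwise localization is in hand the argument is essentially formal, so I do not expect a real obstacle; the points to keep an eye on are the two cited properties of fiberwise localization — that $E \to \bar E$ is an $L$-equivalence and that this persists under base change — together with the reduction to connected bases needed for them to apply.
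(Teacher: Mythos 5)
Your proposal is correct and follows essentially the same route as the paper: replace the fibration by its fiberwise localization, observe that the pullback of the fiberwise localization is the fiberwise localization of the pullback, deduce $L$-flatness of $\bar E \to B$ from that of $E \to B$, apply the hypothesis, and transfer back. The only difference is that you spell out the two transfer steps via naturality squares for the comparison map, where the paper states them more tersely; the content is identical.
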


\begin{proof}
Let us consider a pull-back diagram of fibration sequences
\[
\xymatrix{
F \ar[r] \ar@{=}[d] & P \ar[r] \ar[d] & X \ar[d] \\
F \ar[r] & E \ar[r] & B
}
\]
where the bottom map $E \to B$ is $L$-flat. We have to show that so is the top map $P \to X$.
Applying fiberwise localization, \cite[Section 1.F]{Dror} to both fibrations yields a new diagram
\[
\xymatrix{
LF \ar[r] \ar@{=}[d] & \overline P \ar[r] \ar[d] & X \ar[d] \\
LF \ar[r] & \overline E \ar[r] & B
}
\]
together with maps $E \to \overline E$ and $P \to \overline P$ which are $L$-local equivalences. Notice that $\overline P$ is
obtained as the homotopy pull-back of $\overline E \to B \leftarrow X$. By assumption the map $E \to B$ is $L$-flat,
thus so is the fiberwise localization $\overline E \to B$, since applying $L$ to it yields
the map $LE \to LB$, whose homotopy fiber is $LF$. We suppose that this property is preserved by taking pull-backs of fibrations
with local fiber. Therefore we conclude that the map $\overline P \to X$ is $L$-flat, which implies in turn that
$P \to X$ is so.
\end{proof}

The second step reduces the problem to studying fibration sequences of local spaces.

\begin{lemma}
\label{lem:localbase}
Let $L$ be a localization functor and assume that all fibration sequences of $L$-local spaces are fully $L$-flat. 
Then $L$ is conditionally flat.
\end{lemma}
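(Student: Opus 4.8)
The plan is to reduce the general situation to the one already handled, namely fibration sequences of $L$-local spaces, by combining the previous reduction step (Lemma~\ref{lem:fiberwise}) with a careful analysis of the comparison maps produced by fiberwise localization. By Lemma~\ref{lem:fiberwise} it suffices to show that any $L$-flat map $E\to B$ whose homotopy fiber $F$ is already $L$-local is fully $L$-flat; so fix such a map, together with an arbitrary map $X\to B$ along which we pull back, forming the fibration sequence $F\to P\to X$ with $P=E\times_B X$. We must prove this pullback is $L$-flat.

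First I would localize the base spaces. Replace $B$ by $LB$ and $X$ by $LX$; more precisely, form the homotopy pullback of $LE\to LB\leftarrow LX$. Since $E\to B$ is $L$-flat, $LE\to LB$ is a fibration sequence with fiber $LF=F$ (the fiber is local by hypothesis), so this is a fibration sequence $F\to \widetilde P\to LX$ of the type covered by our assumption, hence fully $L$-flat, and in particular the pullback along the $L$-local space $LX$ is $L$-flat. The content then lies in comparing $P\to X$ with $\widetilde P\to LX$: there is a natural square relating the original pullback to the localized one, and I would check that $X\to LX$ and $E\to LE$ are $L$-equivalences so that $P\to \widetilde P$ is an $L$-equivalence (homotopy pullbacks of $L$-equivalences over a map into an $L$-local space are again $L$-equivalences, using that $B\to LB$ is an $L$-equivalence and the base $LX$, $LB$ are local — this is the same principle invoked in the proof of Lemma~\ref{lem:fiberwise} via fiberwise localization). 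Then $L$-flatness of $\widetilde P\to LX$ transfers to $L$-flatness of $P\to X$, because applying $L$ to the comparison square identifies $LP\simeq L\widetilde P$, $LX\simeq LLX$, and the fibers agree.

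Concretely, I expect the cleanest route is: (1) invoke Lemma~\ref{lem:fiberwise} to assume $F$ is $L$-local; (2) form the fiberwise localization of $E\to B$, which by $L$-flatness and locality of $F$ is just the original map up to $L$-equivalence on total spaces, and whose base can then be taken to be $LB$; at this stage we are looking at a fibration sequence $F\to \overline E\to LB$ with $F$ and $LB$ local, whose total space need not be local but that does not matter; (3) pull back along $X\to B\to LB$, observing that the pullback over $X$ agrees up to $L$-equivalence with the pullback of $\overline E\to LB$ along $LX\to LB$; (4) the latter is a pullback of a fibration sequence of $L$-local spaces, hence $L$-flat by hypothesis; (5) conclude by transporting $L$-flatness back along the $L$-equivalences.

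The main obstacle is step (3)–(5): showing that the various pullbacks differ only by $L$-equivalences, and that $L$-flatness is detected through such equivalences. The subtle point is that the total space $P$ (and $\overline E$) is generally not $L$-local, so one cannot simply apply $L$ termwise; one must argue that $L$ of the comparison map of homotopy fibers is an equivalence. This follows from the fact that a homotopy pullback of an $L$-equivalence along a map between $L$-local spaces is an $L$-equivalence, together with the observation that the homotopy fiber, being $L$-local, is unchanged, so that the Mayer–Vietoris-type comparison of fibers reduces to a statement about the bases, which are local. I would spell this out using the functoriality of fiberwise localization exactly as in Lemma~\ref{lem:fiberwise}, so that the present lemma becomes essentially a corollary of that one once the base has been localized.
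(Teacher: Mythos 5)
Your reduction to a local fiber via Lemma~\ref{lem:fiberwise}, and your use of $L$-flatness to replace $E \to B$ by the fibration sequence $F \to LE \to LB$ of $L$-local spaces, are both on target and match the paper. The gap is in your steps (3)--(5). You pull $LE \to LB$ back along $LX \to LB$ to form $\widetilde P$, apply the hypothesis to $\widetilde P \to LX$, and then try to transfer flatness to $P \to X$ by arguing that $P \to \widetilde P$ is an $L$-equivalence, invoking the principle that ``homotopy pullbacks of $L$-equivalences over a map into an $L$-local space are again $L$-equivalences.'' That principle is false for a general localization functor; its failure is precisely the content of Theorem~\ref{thm:nullification}: pulling the path--loop fibration $\Omega(LB) \to P(LB) \to LB$ (a fibration sequence of local spaces) back along the $L$-equivalence $B \to LB$ yields $\overline{L}B \to P(LB) \simeq \ast$, which is an $L$-equivalence only when $L\overline{L}B$ is contractible, i.e.\ essentially only for nullifications. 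Under the standing hypothesis of the lemma the map $P \to \widetilde P$ does turn out to be an $L$-equivalence, but only as a consequence of the $L$-flatness of $P \to X$ that you are trying to establish (compare $F \to LP \to LX$ with $F \to \widetilde P \to LX$ by the five lemma), so using it as an input is circular.

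The detour through $LX$ is also unnecessary, and removing it repairs the argument. Since $F$ is local and $E \to B$ is $L$-flat, the square formed by $E \to LE$ and $B \to LB$ induces an equivalence on fibers (both are $F$) over connected bases, hence is a homotopy pullback; therefore $P \simeq LE \times_{LB} X$, where $X$ maps to $LB$ via the composite $X \to B \to LB$. Thus $P \to X$ is itself a pullback of the fibration sequence $F \to LE \to LB$ of $L$-local spaces, and the hypothesis --- which asserts \emph{full} $L$-flatness, i.e.\ flatness of pullbacks along arbitrary maps, not only along maps out of local spaces --- applies to it verbatim. This direct route is the paper's proof.
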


\begin{proof}
We know from the previous lemma that we can assume the fiber to be $L$-local. We consider thus an $L$-flat fibration sequence
$F \rightarrow E \rightarrow B$ where $F$ is $L$-local and a map $g: X \rightarrow B$. We can also assume that $B$ is connected.
We complete it to the following diagram by constructing first
the pullback along $g$ and second, by localizing the bottom row:
\[
\xymatrix{
F \ar[r] \ar@{=}[d] & P \ar[r] \ar[d] & X \ar[d] \\
F \ar[r] \ar@{=}[d] & E \ar[r] \ar[d] & B \ar[d] \\
F \ar[r] & LE \ar[r] & LB
}
\]
Since $B$, and hence $LB$ are connected spaces, we see that $E$ is the homotopy pull-back
of the diagram $LE \to LB \leftarrow B$, and therefore $P$ is the homotopy pull-back of $LE \to LB \leftarrow X$.
We conclude that the top fibration sequence is $L$-flat.
\end{proof}

Our third and last step allows us to perform the pullback construction along a very specific map, namely the localization
map $\eta_X: X \rightarrow LX$.

\begin{lemma}
\label{lem:localizationmap}
Let $L$ be a localization functor and assume that, for any connected space $X$ and any fibration sequence
$F \rightarrow E \rightarrow LX$ of $L$-local spaces, the pullback fibration sequence along $\eta_X: X \to LX$ 
is $L$-flat. Then $L$ is conditionally flat.
\end{lemma}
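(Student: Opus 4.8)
The goal is to reduce the general conditional-flatness question — as already cut down by Lemmas \ref{lem:fiberwise} and \ref{lem:localbase} to fibration sequences $F\to E\to B$ of $L$-local spaces together with an arbitrary map $g\colon X\to B$ — to the single, special base change along the localization map $\eta_X\colon X\to LX$. The key observation is that any map $g\colon X\to B$ into an $L$-local space $B$ factors through $\eta_X$, since $B$ being $L$-local means $\eta_X^*\colon \map{}{LX}{B}{}\to\map{}{X}{B}{}$ is a weak equivalence; write $g=\bar g\circ\eta_X$ with $\bar g\colon LX\to B$. The plan is to use this factorisation to split the single pullback along $g$ into two successive pullbacks — first along $\bar g\colon LX\to B$, then along $\eta_X\colon X\to LX$ — and to verify $L$-flatness is inherited at each stage.

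The steps I would carry out, in order. First, assuming the hypothesis, start with an $L$-flat fibration sequence $F\to E\to B$ of $L$-local spaces (so by Lemma \ref{lem:localbase} this suffices) and a connected space $X$ with a map $g\colon X\to B$; factor $g$ through $\eta_X$ as above. Second, form the homotopy pullback of $F\to E\to B$ along $\bar g\colon LX\to B$; this is a fibration sequence $F\to E'\to LX$. Since $B$, $E$, $F$ are $L$-local and $LX$ is $L$-local, the total space $E'$ — a homotopy pullback of $L$-local spaces — is again $L$-local, so this new fibration sequence $F\to E'\to LX$ is a fibration sequence of $L$-local spaces over the localization $LX$ of the connected space $X$. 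Third, invoke the hypothesis: the pullback of $F\to E'\to LX$ along $\eta_X\colon X\to LX$ is $L$-flat. Fourth, observe that the composite of the two pullbacks (along $\bar g$, then along $\eta_X$) is the pullback along $g$ itself, so the resulting fibration sequence $F\to P\to X$ — which is exactly the pullback of the original $F\to E\to B$ along $g$ — is $L$-flat. By Lemma \ref{lem:localbase} (or directly, since we have verified the criterion there), $L$ is conditionally flat.

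The main thing to be careful about — the only real content beyond bookkeeping — is the closure of $L$-local spaces under the homotopy pullbacks that appear: one needs that a homotopy pullback of $L$-local spaces over an $L$-local base is $L$-local, which holds because $L$-local spaces form a limit-closed (reflective) subcategory of the homotopy category, equivalently because $\map{}{W}{-}{}$ sends this homotopy pullback square to a homotopy pullback square for every $W$, hence to one of $L$-equivalences when one applies $\eta_W$. The second point needing attention is that the factorisation $g=\bar g\circ\eta_X$ is well-defined only up to homotopy and that homotopic maps induce equivalent pullbacks — standard, but it must be noted so that ``the pullback along $g$'' and ``the iterated pullback along $\bar g$ then $\eta_X$'' are identified correctly. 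Everything else is the elementary fact that a homotopy pullback of a homotopy pullback is a homotopy pullback, so that iterating base change along $\bar g$ and then $\eta_X$ reproduces base change along their composite $g$.
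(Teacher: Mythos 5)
Your proposal is correct and follows essentially the same route as the paper: factor $g$ through $\eta_X$ using $L$-locality of $B$, form the intermediate pullback over $LX$ (your $E'$ is the paper's $Q$), note it is $L$-local as a homotopy pullback of $L$-local spaces (the paper cites \cite[A.8 (e3)]{Dror} for this), and apply the hypothesis to the remaining base change along $\eta_X$. Nothing further is needed.
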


\begin{proof}
We need only prove by Lemma~\ref{lem:localbase} that a fibration sequence $F \rightarrow E \rightarrow B$ of $L$-local spaces 
is fully $L$-flat. Consider thus any map $\alpha: X \to B$.
We must show that the pull-back fibration sequence $F \rightarrow P \rightarrow X$ is $L$-flat.
Since $\alpha$ factors through the localization map $X \to LX$ we construct a diagram of fibration sequences involving
the $L$-local homotopy  pull-back $Q$ of $E \to B \leftarrow LX$:
\[
\xymatrix{
F \ar[r] \ar@{=}[d] & P \ar[r] \ar[d] & X \ar[d]^{\eta_X} \\
F \ar[r] \ar@{=}[d] & Q \ar[r] \ar[d] & LX \ar[d] \\
F \ar[r] & E \ar[r] & B
}
\]
Since by our assumptions the space $Q$ is a homotopy pull back of  local spaces, it follows
that the top right square is also a homotopy pull-back square and the middle row is a fibration sequence of
$L$-local spaces, \cite[A.8 (e3)]{Dror}. By assumption the top fibration is preserved by $L$.
\end{proof}

We are now ready to prove the main theorem of this section.

\begin{proof}[Proof of Theorem~\ref{thm:nullification}]
We consider a fibration sequence $F \to E \to LB$ of $L$-local spaces and will show that
the pullback along the localization map $\eta_B: B \to LB$ is $L$-flat. We will deduce from Lemma~\ref{lem:localizationmap}
that $L$ is conditionally flat. Let us observe the following diagram:
\[
\xymatrix{
F \ar[r] \ar@{=}[d] & Q \ar[r] \ar[d] & B \ar[d] \\
F \ar[r] & E \ar[r] & LB
}
\]
If $L$ is not a nullification functor, there exists a space $B$ such that the homotopy
fiber $\overline L B$ of the localization map $B \to LB$ is not $L$-acyclic. An explicit example is constructed in \cite[Theorem~2.1, (iv) $\Rightarrow$ (i)]{MR1997044}.
However the fibration sequence $\Omega (LB) \to P(LB) \to LB$
is one of $L$-local spaces. The pull-back fibration sequence $\Omega(LB) \to \overline L B \to B$ is not $L$-flat because the localization
of the total space $L \overline L B$ is not contractible.

\medskip

When $L$ is of the form $P_A$, the homotopy fiber of the localization map $B \to P_A B$ is $P_A$-acyclic. Therefore the fibration sequence
$\overline P_A B \to Q \to E$ is preserved by $P_A$, \cite[Theorem~1.H.1]{Dror}, i.e. $P_A Q \simeq E$ which means that the
fibration sequence $F \rightarrow Q \rightarrow B$ is $P_A$-flat.
\end{proof}

Hence, nullification functors such
as plus-constructions, Postnikov sections, $B\Z/p$-nullification appearing in Miller's work on the Sullivan conjecture, \cite{Miller}, are
all conditionally flat. Counter-examples can now also be easily constructed.

\begin{example}
\label{ex:homology}
{\rm Consider localization $L_{H\mathbf Z}$ with respect to ordinary homology $H^*(-; \mathbb Z)$. There are many spaces for which
the homotopy fiber of the localization are not acyclic, often not even connected. One of the ``smallest" examples  is Whitehead's example, 
\cite[IV.7 Example 3]{MR516508}, of a three cell complex
$X= (S^1 \vee S^2) \cup e^3$ having the homology of a circle. The homological localization map $X \to S^1$ coincides with the first Postnikov
section, so that the homotopy fiber is the universal cover $\tilde X$, a simply connected but non-trivial $H\mathbb Z$-local space. The pull-back
of the path-loop fibration $\mathbb Z \to PS^1 \to S^1$ along the map $X \to S^1$ yields a fibration $\mathbb Z \to \tilde X \to X$ which is not
$L_{H \mathbb Z}$-flat.}
\end{example}

\section{Conditionally flat group-functors and varieties}
\label{sec:group}

We move now to the category of groups, replacing the notion of fibration sequence by short exact sequences. 
Our aim is to look at the notions of flatness and conditional flatness
for functors and extensions of groups. The result we just proved for homotopical localization does not translate directly for groups.
Indeed, we will see in Example~\ref{ex:abelian} below that abelianization is a conditionally flat localization functor
(but not a nullification).
The point of course is that the corresponding homotopical localization of spaces is  not conditionally flat, but it is so on
fibrations which are constructed by applying the classifying space to an extension of groups. We start this section by
proving that group theoretical nullification functors are conditionally flat, even though they are not the only ones.
We notice that the same reduction steps we went through for spaces in Section~\ref{sec:fibrations} do work for groups.

\begin{proposition}
\label{prop:localgroup}
Let $L$ be a localization functor in the category of groups. Assume that, for any group $G$ and any extension of $L$-local
groups $K \to E \to LG$, the pull-back along the localization morphism $\eta_G: G\to LG$ is $L$-flat. Then $L$ is conditionally flat.
\end{proposition}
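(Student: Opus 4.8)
The plan is to mirror, step by step, the three reduction lemmas (Lemmas~\ref{lem:fiberwise}, \ref{lem:localbase}, \ref{lem:localizationmap}) that were used in the homotopical setting, replacing fibration sequences by short exact sequences, homotopy fibers by kernels, and homotopy pull-backs by honest pull-backs of groups. Concretely, I would first prove the analogue of Lemma~\ref{lem:fiberwise}: given a pull-back square of extensions with bottom extension $L$-flat, one applies the fiberwise localization construction for groups (replace the kernel $K$ by $LK$ fiberwise inside $E$, producing $\overline E$ with $E\to\overline E$ an $L$-equivalence and $\overline E\to B$ still $L$-flat with $L$-local kernel), and then invokes the hypothesis to pass flatness through the pull-back. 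Next I would prove the analogue of Lemma~\ref{lem:localbase}: having reduced to $L$-local kernel $K$, use that an $L$-flat extension $K\to E\to B$ makes $E$ the pull-back of $LE\to LB\leftarrow B$ (the key point being that $K\to E\to B$ and $LK\cong K\to LE\to LB$ have the same kernel, and $B\to LB$ is surjective since $B$ is a group so the induced square is a genuine pull-back of groups), hence any further pull-back $P\to X$ along $X\to B$ is also a pull-back over $LB$, reducing us to extensions of $L$-local groups. Finally, the analogue of Lemma~\ref{lem:localizationmap}: any $\alpha\colon X\to B$ with $B$ $L$-local factors through $\eta_X\colon X\to LX$, so the pull-back over $X$ is the pull-back over $LX$ of a pull-back over $B$ of $L$-local groups; since pull-backs of $L$-local groups are $L$-local (the category of $L$-local groups is closed under limits), the middle row is an extension of $L$-local groups over $LX$, and the stated hypothesis finishes the argument. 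Chaining these three reductions gives exactly the statement of the proposition.

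In practice I would not belabor each reduction in full, but rather point out that the arguments of Lemmas~\ref{lem:fiberwise}--\ref{lem:localizationmap} transcribe verbatim once one checks the three substitutions are legitimate in the category of groups. The verifications needed are: (1) fiberwise localization exists for group extensions --- i.e.\ given $K\to E\to B$ and the localization $K\to LK$, there is an extension $LK\to\overline E\to B$ with a compatible map $E\to\overline E$ that is an $L$-equivalence; this is the group-theoretic analogue of \cite[Section 1.F]{Dror} and can be built directly (for instance via a suitable push-out, or by the fact that $L$ applied to the extension $K\rightarrowtail E\twoheadrightarrow B$ combined with the pull-back back along $B\to LB$ produces such an $\overline E$); (2) an extension $K\to E\to B$ with $K$ $L$-local that is $L$-flat identifies $E$ with the pull-back $LE\times_{LB}B$ --- here one uses that $\eta_B\colon B\to LB$ is surjective (automatic for groups, since localization maps of groups are surjective onto a quotient-type image in the relevant cases, or more precisely one uses that $LE\to LB$ has kernel $LK=K$ equal to the kernel of $E\to B$, so the map $E\to LE\times_{LB}B$ is an isomorphism on kernels and on the image in $B$); (3) the full subcategory of $L$-local groups is closed under pull-backs, which follows formally from $L$ being a localization functor (local objects are closed under all limits).

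The main obstacle I expect is item (2), and more specifically the surjectivity issue: in the homotopical proof one freely uses that for a \emph{connected} base $B$, the space $E$ is the homotopy pull-back of $LE\to LB\leftarrow B$, and the connectivity hypothesis is what makes this work (\cite[A.8]{Dror}). In the group setting the correct replacement of ``connected'' is built in --- every group is ``connected'' in the relevant sense --- but one must be careful that the localization morphism $\eta_G\colon G\to LG$ need not be surjective for a general group-theoretic localization functor, and if it is not, the naive pull-back square $E\to LE\times_{LB}B$ may fail to be an isomorphism, or the pulled-back sequence may fail to be a \emph{short exact} sequence (one could lose surjectivity of the total map onto the base). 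So the crux is to check that $L$-flatness of the bottom extension forces the comparison $E\to LE\times_{LB}B$ to be an isomorphism of extensions; this is where the hypothesis ``$K\to E\to LG$ is $L$-flat after pull-back along $\eta_G$'' is genuinely used, and one should phrase the reductions so that one only ever needs pull-backs along localization morphisms $\eta$, for which the relevant exactness is part of the assumption rather than something to be proved. Once that is arranged, the rest is bookkeeping identical to Section~\ref{sec:fibrations}.
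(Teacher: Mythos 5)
Your proposal follows essentially the same route as the paper, which likewise proves this proposition by transcribing the three reduction lemmas of Section~\ref{sec:fibrations} into the category of groups: fiberwise localization to make the kernel local, recognition of a pull-back square of extensions by comparing kernels (the short five lemma, which disposes of your surjectivity worry since $L$-flatness already guarantees $LE \to LB$ is onto), and closure of local groups under pull-backs to reduce to pulling back along $\eta_G$. The one point where you are too casual is the existence of fiberwise localization for group extensions: this is not something to be ``built directly'' but a genuine theorem, for which the paper cites Casacuberta and Descheemaeker \cite{MR2125447}.
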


\begin{proof}
We must show that the pull-back of an $L$-flat extension is in turn $L$-flat.
The first reduction step allowing us to consider only extensions with local kernel is obtained by applying
fiberwise localization to our group extensions. Such a construction
is available for groups thank to work of Casacuberta and Descheemaeker, \cite{ MR2125447}.
The second step reduces to the study of extensions of local groups and this works simply because one recognizes
a pull-back square by comparing the kernels. The third and last step is exactly as in Lemma~\ref{lem:localizationmap}
and permits us to pull-back along a localization map $G \rightarrow LG$.
\end{proof}

To any group homomorphism $\varphi$ one associates an (idempotent, augmented) localization functor $L_\varphi$
in the category of groups, which inverts $\varphi$ in  a universal way. When $\varphi$ is of the form $H \rightarrow \{ e \}$, the
localization is called \emph{nullification} and usually written $P_H$, just like in the homotopical setting. 

\begin{theorem}
\label{thm:nullificationgroup}
Any nullification functor in the category of groups is conditionally flat.
\end{theorem}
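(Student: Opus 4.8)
The strategy is to mirror exactly the homotopical argument in the proof of Theorem~\ref{thm:nullification}, using Proposition~\ref{prop:localgroup} as the analogue of Lemma~\ref{lem:localizationmap}. Thus it suffices to take a nullification functor $L = P_H$ in the category of groups, an extension of $P_H$-local groups $K \to E \to P_H G$, and show that the pullback extension along the localization morphism $\eta_G : G \to P_H G$ is $P_H$-flat. Form the pullback $1 \to K \to Q \to G \to 1$, so that $Q$ is the fiber product $E \times_{P_H G} G$. The kernel of $Q \to G$ is $K$, which is already $P_H$-local by hypothesis, so the content is entirely about the middle term: we must check that $P_H Q \cong E$, equivalently that the map $Q \to E$ is a $P_H$-equivalence.

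The key step is to identify the kernel of $Q \to E$ and recognise it as $P_H$-acyclic. Concretely, $Q \to E$ is the base change of $G \to P_H G$ along $E \to P_H G$, so its kernel is the kernel $\bar P_H G$ of $\eta_G : G \to P_H G$. The group-theoretic analogue of the fact that the homotopy fiber of a nullification map is itself nullacyclic is exactly what is needed here: $\bar P_H G$ is a $P_H$-acyclic group, i.e. $P_H(\bar P_H G) = 1$. This follows from the closure properties of the class of $P_H$-acyclic groups established by Casacuberta and collaborators — the kernel of the localization morphism is built from copies of $H$ by the operations (extensions, colimits, free products) under which $P_H$-acyclicity is preserved — so I would cite the group-theoretic version of \cite[Theorem~1.H.1]{Dror}, namely the result of Casacuberta--Descheemaeker \cite{MR2125447} that already underlies the fiberwise localization used in Proposition~\ref{prop:localgroup}. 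Granting this, we have an extension $1 \to \bar P_H G \to Q \to E \to 1$ with $P_H$-acyclic kernel and $P_H$-local quotient $E$; since $P_H$ carries such an extension to an isomorphism $P_H Q \xrightarrow{\ \cong\ } E$ (the group-theoretic ``preservation of fibrations with acyclic fiber''), the extension $1 \to K \to Q \to G \to 1$ is $P_H$-flat. Proposition~\ref{prop:localgroup} then yields that $P_H$ is conditionally flat.

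\textbf{Main obstacle.} The delicate point, just as in the spaces case, is the verification that the kernel of the localization morphism $\eta_G$ is $P_H$-acyclic, and dually that an extension with $P_H$-acyclic kernel and $P_H$-local quotient has $P_H$-local total group. In the homotopical setting these are \cite[Theorem~1.H.1]{Dror} and the general nonsense about nullacyclic fibers; for groups one must appeal to the parallel development, and I expect the cleanest route is to quote \cite{MR2125447} for both the fiberwise-localization construction and the requisite closure properties of $P_H$-acyclic groups. Everything else — forming pullbacks, matching kernels, recognising base-change squares — is the same elementary diagram chase used in Lemma~\ref{lem:localbase} and Lemma~\ref{lem:localizationmap}, now carried out with short exact sequences in place of fibration sequences.
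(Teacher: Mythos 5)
Your argument is essentially the paper's own proof: reduce via Proposition~\ref{prop:localgroup} to pulling back along $\eta_G$, and then use that the kernel of the nullification morphism $G \to P_H G$ is $P_H$-acyclic, so that the extension $1 \to \bar P_H G \to Q \to E \to 1$ is carried to an isomorphism $P_H Q \cong E$. The only difference is bibliographic: the paper cites Rodr\'\i guez--Scevenels \cite[Proposition~3]{MR1758736} for the acyclicity of the kernel (which holds if and only if $L$ is a nullification), rather than \cite{MR2125447}.
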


\begin{proof}
The key point is that the kernel of the localization morphism $G \to LG$ is $L$-acyclic when (in fact if and only if) $L$ is
a nullification functor, \cite[Proposition~3]{MR1758736}.
\end{proof}

We move now as promised to more ``exotic"  conditionally flat localization functors, that is some which are not nullifications.
Our motivation was to study the interplay of pulling back an extension and taking the quotient by the lower central series.
We are now ready to come back to this question.

\begin{example}
\label{ex:abelian}
{\rm Assume that the group extension $1 \rightarrow K \to E \to G \rightarrow 1$ \emph{abelianizes well}, that is,
the abelianization $0 \rightarrow K_{ab} \to E_{ab} \to G_{ab} \rightarrow 0$
forms an extension (of abelian groups). Then for any morphism $H \to G$, the pull-back extension $K \to P \to H$ also abelianizes
well. In our general terminology, abelianization is conditionally flat.

The argument is simple. Abelianization is right exact, a fact that can be proved either directly, or by using the group
homology five term exact sequence (which can be deduced from Hopf's formula, \cite[Exercise~II.5.6]{MR672956}).
Hence we only need to show that $K_{ab}\to P_{ab}$ is injective. By assumption the extension
$1 \rightarrow K \to E \to G \rightarrow 1$ is ab-flat, hence $K_{ab}\to E_{ab}$
is injective. As it factors through $P_{ab}$ the conclusion follows.
}
\end{example}

The same proof actually applies to \emph{any} right exact functor.

\begin{proposition}
\label{prop:halfexact}
Let $F$ be a right exact functor in the category of groups. Then $F$ is conditionally flat. \hfill{\qed}
\end{proposition}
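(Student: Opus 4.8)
The plan is to follow verbatim the template established in Example~\ref{ex:abelian}, simply replacing abelianization by the arbitrary right exact functor $F$. Let $1 \to K \to E \to G \to 1$ be an $F$-flat extension, so that by definition $1 \to FK \to FE \to FG \to 1$ is a short exact sequence, and let $\varphi\colon H \to G$ be any homomorphism. Form the pull-back extension $1 \to K \to P \to H \to 1$, where $P = E \times_G H$. I must show that $1 \to FK \to FP \to FH \to 1$ is again exact. The point is that right exactness gives me for free that $FK \to FP \to FH \to 1$ is exact (a right exact functor preserves cokernels, and $H = P/K$), so the only thing left to check is injectivity of the map $FK \to FP$.

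For that injectivity I use the factorization coming from the map of extensions. The universal property of the pull-back gives a morphism of short exact sequences
\[
\xymatrix{
1 \ar[r] & K \ar[r] \ar@{=}[d] & P \ar[r] \ar[d] & H \ar[r] \ar[d]^{\varphi} & 1 \\
1 \ar[r] & K \ar[r] & E \ar[r] & G \ar[r] & 1
}
\]
with the identity on the kernel $K$. Applying the functor $F$ and using functoriality, the composite $FK \to FP \to FE$ equals the map $FK \to FE$ induced by the bottom inclusion. By the hypothesis that the bottom extension is $F$-flat, this composite is injective; hence its first factor $FK \to FP$ is injective as well. Combined with the right exactness observation, this shows $1 \to FK \to FP \to FH \to 1$ is exact, i.e. the pull-back extension is $F$-flat, which is exactly the statement that $F$ is conditionally flat.

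There is essentially no obstacle here: the argument is a two-line diagram chase once one records that a right exact functor automatically preserves the quotient part of a short exact sequence. The only mild subtlety worth flagging explicitly is that \emph{right exactness alone} does not make $F$ preserve exact sequences in general (that is precisely why conditional flatness, rather than flatness, is the best one can hope for), so the $F$-flatness hypothesis on the \emph{original} extension is genuinely used, and used exactly at the point where we need injectivity of $FK \to FE$. This is also why the proof of Example~\ref{ex:abelian} transcribes without change: abelianization enters there only through its right exactness and nothing else.
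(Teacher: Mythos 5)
Your proof is correct and is essentially identical to the paper's: the authors prove Proposition~\ref{prop:halfexact} by observing that the argument of Example~\ref{ex:abelian} (right exactness handles exactness at $FP$ and $FH$, and injectivity of $FK\to FP$ follows because it is a factor of the injective map $FK\to FE$ supplied by the $F$-flatness hypothesis) applies verbatim to any right exact functor. No differences worth noting.
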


A \emph{variety of groups} $\mathcal W$ is
defined by a set of words $W$ in a free group $F$ on a countable, infinite set of generators $\{ x_1, x_2, x_3, \dots \}$.
Following \cite{MR0215899}, $\mathcal W$ is the family of all groups $G$ having the property that every homomorphism
from $F$ to $G$ sends the words in $W$ to~$1$. Take $\phi: F \to F/WF$, where $WF$ is the normal subgroup
generated by images of words in $W$ under  all homomorphisms $F \to F$. The localization functor $L_\phi$ sends then a group
$G$ to the largest quotient which belongs to the variety $\mathcal W$, \cite[Proposition~3.1]{MR1676617}. The kernel can
be described as the subgroup $WG$ of $G$ generated by all images of words in $W$ under morphisms from $F$.

\begin{example}
\label{ex:lowercentral}
{\rm For any given integer $c\geq 1$, we take $W$ to be generated by the single word $[ \dots [x_1, x_2 ], \dots x_{c}], x_{c+1}]$,
a $c$-fold commutator.
For any group $G$ the subgroup $WG$ is nothing but $\Gamma_c (G)$ the $c$-th term in the lower central series. Thus, the
localization $L_\phi$ sends $G$ to $G/\Gamma_c(G)$. When $c=1$ for example, $W$ is generated by a single
commutator $[x_1, x_2]$. A group belongs to $\mathcal W$ if and only if it is abelian, the group homomorphism $\phi$ is
$F \rightarrow F/[F, F] = F_{ab}$ and $L_\phi$ is abelianization.
}
\end{example}

In general $W(WG) \neq WG$, as is shown by abelianization (of the dihedral group of order $8$ say). In fact Casacuberta, Rodr\'\i guez and
Scevenels show that $W(-)$ is idempotent if and only if the corresponding localization is a nullification, \cite[Theorem~2.3]{MR1676617}. 
We prove now that any variety
of groups determines a right exact localization functor, hence a conditionally flat functor. We could also have applied our general
principle Proposition~\ref{prop:localgroup} and proven ``by hand" that the pull-back of an extension of local groups
$K \to E \to G/WG$ along the localization map $G \to G/WG$ is flat.


\begin{theorem}
\label{thm:variety}
Let $\mathcal W$ be any variety of groups. The asssociated localization functor $L$ in the category of groups
is then right exact and thus conditionally flat.
\end{theorem}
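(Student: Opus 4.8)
The plan is to show directly that the functor $L = L_\phi$ attached to a variety $\mathcal W$ is right exact, since Proposition~\ref{prop:halfexact} then immediately gives conditional flatness. Recall that $L$ sends a group $G$ to $G/WG$, where $WG$ is the verbal subgroup generated by all images of the words $W$ under homomorphisms $F \to G$. So given a surjection $E \twoheadrightarrow G$ with kernel $N$, I must check that the induced sequence $LN \to LE \to LG \to 1$ is exact, i.e. that $LE \twoheadrightarrow LG$ with kernel the image of $LN$.

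First I would observe that verbal subgroups are functorial and even behave well under surjections: if $q\colon E \to G$ is onto, then $q(WE) = WG$, because every homomorphism $F \to G$ lifts (not canonically, but it does not matter) through $q$ after composing with a lift $F \to E$, so the generators of $WG$ are hit; conversely $q$ sends generators of $WE$ to generators of $WG$. This gives surjectivity of $LE = E/WE \to G/WG = LG$. Next I would identify the kernel of this map: it is $q^{-1}(WG)/WE = NWE/WE$, the image of $N$ in $E/WE$. Finally I need that this image is precisely the image of $LN = N/WN$; equivalently, that the composite $N \to E \to E/WE$ has image $NWE/WE$ (clear) and factors through $N/WN$, which holds because $WN \subseteq WE$ (verbal subgroups are monotone under inclusion of subgroups, as $W$ applied to maps $F \to N \hookrightarrow E$ lands in $WE$). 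Assembling these three facts yields exactness of $N/WN \to E/WE \to G/WG \to 1$, which is exactly right exactness of $L$.

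The only subtlety — and the step I expect to be the main point to get right — is the surjectivity claim $q(WE) = WG$, or more precisely the interaction between $WE$ and the kernel $N$: one must be careful that $WG$ is generated by images of words under \emph{all} homomorphisms $F \to G$, and verify that each such homomorphism does lift along $q$ (using that $F$ is free, so any map out of $F$ lifts through a surjection). Once that is in hand the rest is formal diagram chasing with verbal subgroups. I would then simply invoke Proposition~\ref{prop:halfexact} to conclude that $L$ is conditionally flat, completing the proof and, via Example~\ref{ex:lowercentral}, answering the motivating question about nilpotent quotients affirmatively.
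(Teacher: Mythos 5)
Your proposal is correct and follows essentially the same route as the paper: reduce to right exactness via Proposition~\ref{prop:halfexact}, then verify exactness of $K/WK \to E/WE \to G/WG \to 1$ by lifting the generators of $WG$ along the surjection (the paper lifts a product of conjugates of values $w(g_i)$ to one of values $w(e_i)$, which is your freeness-of-$F$ lifting argument in slightly different clothing) and observing that the discrepancy lies in the kernel. The subtlety you flag is exactly the step the paper's proof hinges on, and your treatment of it is sound.
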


\begin{proof}
Let $W$ be the set of words defining $\mathcal W$.
By Proposition~\ref{prop:halfexact} it is enough to prove that for any extension $1 \rightarrow K \rightarrow E \xrightarrow{p} G \rightarrow 1$,
the sequence $K/WK \rightarrow E/WE \rightarrow G/WG \rightarrow 1$ is exact. The localization $G \rightarrow G/WG$ is surjective,
hence so is $\bar p:E/WE \rightarrow G/WG$. We only need to identify the classes  of the form $eWE,$ for $e\in E,$ in the kernel of this 
last  morphism $\bar p$, which means
that $p(e) \in  WG$. In other words $p(e)$ can be written as a product $\gamma$ of conjugates of words $w(g_i)$ with $w \in W$. Since $p$ is surjective
there is a product $\epsilon$ of conjugates of the same words $w(e_i)$ whose image under $p$ is $p(e) = \gamma$. Therefore,
$e$ and $\epsilon$ differ by an element $k$ in the kernel $K$. But now, since $\epsilon \in WE$, we have
\[
eWE = e\epsilon^{-1} WE = k WE
\]
which proves exactness at $E/WE$.
\end{proof}

Since nilpotency is described by a variety of groups, we obtain  the following  result:

\begin{corollary}
\label{cor:nilpotent}
The localization functor in the category of groups
taking a group $G$ to its quotient  $G/\Gamma_c(G)$ by the lower central series is conditionally flat. 
\hfill{\qed}
\end{corollary}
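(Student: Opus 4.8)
The plan is to deduce Corollary~\ref{cor:nilpotent} directly from Theorem~\ref{thm:variety} by exhibiting the functor $G \mapsto G/\Gamma_c(G)$ as the localization associated to a variety of groups. This identification is already spelled out in Example~\ref{ex:lowercentral}: taking $W$ to be the singleton set consisting of the $c$-fold iterated commutator $[\,\dots[x_1,x_2],\dots,x_c],x_{c+1}]$, the verbal subgroup $WG$ is precisely $\Gamma_c(G)$, the $c$-th term of the lower central series, so the variety $\mathcal W$ is the variety of nilpotent groups of class less than~$c$. Hence the first step is simply to recall that the nilpotent quotient functor is of the form $L_\phi$ for $\phi\colon F \to F/WF$ with this choice of $W$.

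Once this is in place, the second and final step is to invoke Theorem~\ref{thm:variety}, which asserts that the localization functor attached to any variety of groups is right exact, and Proposition~\ref{prop:halfexact}, which says right exact functors are conditionally flat; combining the two gives that $G \mapsto G/\Gamma_c(G)$ is conditionally flat. In practice the corollary is a one-line consequence, so the ``proof'' is really just the sentence ``Apply Theorem~\ref{thm:variety} to the variety of class-$c$ nilpotent groups described in Example~\ref{ex:lowercentral}.''

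There is essentially no obstacle here, since all the work has been done upstream; the only point that deserves a word of care is making sure the correspondence between the defining word and the lower central series is stated with the right indexing convention (whether $\Gamma_1 = G$ or $\Gamma_1 = [G,G]$, and correspondingly whether the $c$-fold commutator yields $\Gamma_c$ or $\Gamma_{c+1}$). I would just match whatever convention Example~\ref{ex:lowercentral} has fixed and quote it verbatim, so that the statement of the corollary is consistent with the abstract's phrasing in terms of $\Gamma_{c+1}$-flatness. Modulo that bookkeeping, the corollary follows immediately and needs no independent argument.
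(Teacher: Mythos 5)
Your proposal matches the paper exactly: the corollary is stated with no proof precisely because it is the immediate combination of Example~\ref{ex:lowercentral} (identifying $G\mapsto G/\Gamma_c(G)$ as the localization attached to the variety defined by the $c$-fold commutator word) with Theorem~\ref{thm:variety}. Your remark about checking the indexing convention for $\Gamma_c$ versus $\Gamma_{c+1}$ is a sensible bit of bookkeeping but introduces no new content.
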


\begin{remark}
\label{rem:classify}
{\rm The classifying space functor $B: \text{Groups} \to \text{Spaces}_*$ takes a discrete group $G$ to
the Eilenberg--Mac Lane space $BG$. Let $\varphi$ be a group homomorphism such that the localization
functor $L_\varphi$ is conditionally flat in the category of groups, but it not a nullification (for example the
above quotients by a given term of the lower central series). The homotopical localization $L_{B \varphi}$
associated to the corresponding map of classifying spaces is not conditionally flat as we know from
Theorem~\ref{thm:nullification}. However extensions of $\varphi$-local groups yield fibration sequences
of $B\varphi$-local classifying spaces and pull-back of such fibration sequences along any map of classifying
spaces are $L$-flat.
}
\end{remark}

\section{Examples, counter examples, and open questions}
\label{sec:negative}
Localization functors in the category of groups associated to varieties of groups or nullification functors
are conditionally flat.
However, as soon as the localization we consider is not one corresponding to a variety, things can easily go ``wrong".
Let us construct various counter examples.

\subsection{Epireflections and quasi-varieties}
A localization functor $L$ in the category of groups is called an \emph{epireflection}
if the localization morphism $G \to LG$ is always an epimorphism. Such localization functors
are in one to one correspondence with subfunctors of the identity, usually called \emph{radicals}
since, to an epireflection one associates the radical $R_L$ defined by $R_L(G) = \hbox{\rm Ker}(G \rightarrow LG)$.
A good source for the group theorist's point of view on radicals is Robinson's book \cite{MR48:11314}.

A localization functor in the category of groups is an epireflection if and only if there exists an epimorphism
$\varphi$ such that $L_\varphi$ is $L$. Thus every variety of groups $\mathcal W$ determines an epireflection, but
we will see that being an epireflection is not enough for conditional flatness, compare with \cite[Proposition~5]{MR1758736}.

\begin{theorem}
\label{thm:epireflection}
There are epireflections $L$ which are not conditionally flat.
\end{theorem}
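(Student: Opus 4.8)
The plan is to exhibit an explicit epireflection on groups together with an extension which is $L$-flat but whose pullback along some homomorphism is not. The natural place to look is among epireflections that do \emph{not} come from a variety of groups, so that Theorem~\ref{thm:variety} does not apply. A good candidate is the class $\mathcal{Q}$ of groups defined by a \emph{quasi-identity} (a quasi-variety) rather than an identity, for instance the class of groups in which a fixed nontrivial element is not a product of commutators, or more concretely the class of groups with no $p$-torsion, or the class of perfect-radical-free groups. Concretely I would take $L$ to be the epireflection whose associated radical $R_L(G)$ is the smallest normal subgroup such that the quotient lies in a quasi-variety that fails to be a variety; the key structural feature needed is that $R_L$ is \emph{not} right exact, i.e. there is an extension $1\to K\to E\to G\to 1$ with $R_L(E)\to R_L(G)$ not surjective, equivalently $LK\to LE$ not injective after comparing with an $L$-flat situation.

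First I would recall, following Robinson~\cite{MR48:11314} and the discussion just preceding the statement, that an epireflection is the same datum as a radical $R_L$, and that $L$ is conditionally flat only if (by an argument parallel to the proof of Theorem~\ref{thm:nullification} and Proposition~\ref{prop:localgroup}) a certain pullback along a localization map $\eta_G\colon G\to LG$ stays flat. Then I would pick a specific small example where this fails: take a group $G$ with $LG=G$ already local, an extension $1\to K\to E\to G\to 1$ of local groups that is automatically $L$-flat (since all three terms are local, $LK\to LE\to LG$ is just $K\to E\to G$), and then pull back along a non-surjective-on-radicals map $H\to G$ from a non-local group $H$; the pullback $1\to K\to P\to H\to 1$ will have $LP$ failing to surject onto $LH$ or $LK$ failing to inject into $LP$. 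The cleanest realization uses a free-like $H$ so that $LH$ collapses a lot while the extension structure forces $LP$ to remain large.

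The key steps, in order, are: (1) fix a quasi-variety $\mathcal{Q}$ that is provably not a variety, and describe its epireflection $L$ and radical $R_L$ explicitly enough to compute on a handful of small groups; (2) verify $R_L$ is not right exact by producing one extension where $R_L(E)\to R_L(G)$ is not onto — this is the concrete group-theoretic heart of the construction; (3) upgrade that failure of right-exactness into a failure of conditional flatness by exhibiting an $L$-flat extension of $L$-local groups whose pullback along a suitable map is not $L$-flat, using the observation that extensions of local groups are tautologically $L$-flat; (4) check that $L$ is genuinely an epireflection (the localization map is always epi) so the example lands in the right class. I expect step~(2) to be the main obstacle: one must choose the quasi-variety and the test extension carefully so that the radical behaves non-functorially under the extension, and verify by hand (using the explicit description of $R_L$ as a verbal-type subgroup relative to a quasi-identity) that the relevant map of radicals misses something — this is exactly where the distinction between identities and quasi-identities is doing the work, since for honest varieties Theorem~\ref{thm:variety} guarantees the opposite. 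Steps (1), (3), (4) are then largely bookkeeping: (3) follows formally from (2) together with the reduction lemmas already established, and (4) is immediate from the definition of the quasi-variety's reflection.
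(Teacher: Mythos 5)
Your high-level strategy is the same as the paper's: choose an epireflection coming from a quasi-variety that is not a variety, start from an extension of $L$-local groups (which is tautologically $L$-flat), and pull it back along a map from a non-local group so that localizing the pullback destroys exactness. The paper's Remark~\ref{rmk:quasivariety} confirms that the quasi-variety/variety distinction is exactly the right dividing line. However, your proposal stops short of an actual proof: the entire content of the theorem is a concrete example, and you never produce one. You list three candidate classes without committing to any (one of them, ``groups in which a fixed nontrivial element is not a product of commutators,'' is not even a well-posed class of groups), you do not exhibit the extension or the map along which to pull back, and you explicitly defer ``the concrete group-theoretic heart'' as the main obstacle. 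For the record, the paper's example is very small: take $\phi\colon C_4\to C_2$ and $L=L_\phi$; torsion-free groups and $\mathbf Z/2$ are $L$-local, so $\mathbf Z\xrightarrow{\cdot 2}\mathbf Z\to\mathbf Z/2$ is an extension of local groups, hence $L$-flat; pulling back along $\phi$ itself gives $\mathbf Z\to\mathbf Z\times\mathbf Z/2\to\mathbf Z/4$, whose localization $\mathbf Z\to\mathbf Z\times\mathbf Z/2\to\mathbf Z/2$ is no longer exact.

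There is also a logical slip in your steps (2)--(3). You propose to first show that the radical $R_L$ is not right exact and then claim that the failure of conditional flatness ``follows formally'' from this together with the reduction lemmas. It does not: right exactness is a \emph{sufficient} condition for conditional flatness (Proposition~\ref{prop:halfexact}), not a necessary one, and the reduction lemmas (Proposition~\ref{prop:localgroup}) run in the opposite direction --- they reduce \emph{proving} conditional flatness to checking pullbacks along localization maps. So exhibiting one extension on which $R_L$ fails to be right exact does not by itself yield an $L$-flat extension with a non-$L$-flat pullback; you must construct the flat extension and the offending pullback directly, which is precisely the concrete step your proposal leaves open.
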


\begin{proof}
Let $\phi: C_4 \to C_2$ be the projection and choose $L=L_\phi$. Any torsion-free group is local with respect to this
epireflection since there are no non-trivial morphism from a torsion group to a torsion-free group. Moreover
the cyclic group of order $2$ is local as well (it is the localization of~$C_4$).

Therefore the abelian group extension $\mathbf Z \to \mathbf Z \to \mathbf Z/2$ is an extension of local groups.
Let us pull it back along $\varphi$ itself. The pull-back $P$ is an extension of $\mathbf Z$ by $\mathbf Z/2$, which
must be trivial, so $P$ is isomorphic to $\mathbf Z \times \mathbf Z/2$, another local group! The pull-back extension
$\mathbf Z \to \mathbf Z \times \mathbf Z/2 \to \mathbf Z/4$ is therefore not preserved by $L$.
\end{proof}

\begin{remark}
\label{rmk:quasivariety}
{\rm Localization with respect to $C_4 \to C_2$ is an epireflection, and even better a localization associated to a
so-called \emph{quasi-variety}, \cite{MR0282908}. Whereas for a variety one imposes certain words to become trivial, in a quasi-variety one
does so provided certain equations are satisfied. In the previous proof the condition is that $x^4 = 1$. If so, then one
imposes $x^2 = 1$. We have thus actually proven a little bit more than stated in Theorem~\ref{thm:epireflection}: There
are epireflections associated to quasi-varieties which are not conditionally flat.
}
\end{remark}

\subsection{Other localization functors}
We turn now to a general localization functor and study which are the features which allow for
the construction of a non-$L$-flat pull-back from an $L$-flat group extension.
What is the general principle which lies behind this compatibility between pulling back and localizing?
Since nullification functors are known to be conditionally flat, we discard them and work from now on with a localization functor
which is not of the form $P_A$.

\begin{lemma}
\label{lem:notnullification}
Let $L_\phi$ be a localization functor which is not a nullification. Then there exists a non-identity localization morphism
$G \to L_\phi G$ which has $L_\phi$-local kernel.
\end{lemma}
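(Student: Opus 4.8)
The plan is to contrast the behaviour of a general non-nullification localization $L_\phi$ with the key property of nullifications recorded just above, namely that for a nullification $P_A$ the kernel $R(G)=\ker(G\to P_AG)$ is always $P_A$-acyclic, and in fact $P_A$ is a nullification \emph{if and only if} all such kernels are acyclic. So if $L_\phi$ is not a nullification, we know there is at least one group $G$ for which $R(G)=\ker(\eta_G\colon G\to L_\phi G)$ fails to be $L_\phi$-acyclic. What we must produce, however, is a group $G$ for which $\eta_G$ is not an isomorphism (equivalently $R(G)\ne\{e\}$, i.e. $\eta_G$ is a genuine localization) and simultaneously $R(G)$ is $L_\phi$-\emph{local}. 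These are not the same condition, so the first reduction is to understand how to pass from ``some kernel is not acyclic'' to ``some kernel is local and nontrivial''.

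First I would dispose of a trivial sub-case: if $L_\phi$ is the identity functor, there is nothing to prove because the identity is the nullification $P_{\{e\}}$, contradicting our hypothesis; so $\eta_G$ is a non-isomorphism for at least one $G$, and every such $G$ gives a non-identity localization morphism. The substantive point is locality of the kernel. Here I would iterate the localization construction on the kernel itself. Given $G$ with $R(G)\ne\{e\}$, set $N=R(G)$ and consider $\eta_N\colon N\to L_\phi N$; then replace $G$ by a group built so that its kernel is $L_\phi$-local. Concretely, the cleanest route is to exploit the fact established in Proposition~\ref{prop:localgroup} and Theorem~\ref{thm:nullificationgroup}: for a nullification the kernel is acyclic, hence the failure of acyclicity for $L_\phi$ means there is a local group $A$ (any nontrivial $L_\phi$-local group will do, e.g. $L_\phi G$ for suitable $G$, or any group in the image of $L_\phi$ that is nontrivial) and an extension realizing it as a kernel. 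The precise model I would use is the extension $1\to A\to A\wr (L_\phi G)\to \dots$ — but more economically, take any nontrivial $L_\phi$-local group $H$ and form the split extension $1\to H\to H\times G'\to G'\to 1$ where $G'$ is chosen non-$L_\phi$-local; pulling back or comparing localization maps, the kernel of the localization of a suitable such extension is forced to be (a retract of, hence) $L_\phi$-local. The key input is that $L_\phi$-local groups are closed under retracts and under the relevant limit constructions, so that a kernel sitting inside a product of local groups, and split off, is local.

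The step I expect to be the main obstacle is exhibiting, in a way that is genuinely general (valid for \emph{every} non-nullification $L_\phi$), a single group $G$ where the localization map is non-trivial \emph{and} has local kernel, rather than merely non-acyclic kernel. The temptation is to take $G$ with $R(G)$ merely non-acyclic and then ``localize the kernel'', but naively replacing $R(G)$ by $L_\phi(R(G))$ need not produce a new group whose own localization kernel is exactly that local group — the interaction between the ambient localization and the kernel is subtle. I would handle this by a two-step argument: first show that if \emph{every} non-identity localization morphism had non-local kernel, then by re-localizing kernels repeatedly (a transfinite descending process on, say, cardinality or on the local-coreflection of the kernel) one would reach a contradiction with the existence of even one non-trivial localization; alternatively, and more directly, I would use a known structural description — that $\ker(G\to L_\phi G)$ for the smallest nontrivial example can be arranged to be $L_\phi$-local by choosing $G$ to be an extension $1\to H\to G\to L_\phi H\to 1$ with $H$ local but $G$ not local, so that $L_\phi G$ kills exactly $H$ and $R(G)=H$ is local. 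Verifying that such a $G$ exists whenever $L_\phi$ is not a nullification is where the real work lies, and I would base it on the characterization of nullifications via acyclic kernels cited above together with closure properties of the class of $L_\phi$-local groups.
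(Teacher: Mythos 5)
There is a genuine gap: you correctly isolate the crux --- passing from ``some kernel of a coaugmentation is non-acyclic'' to ``some non-identity coaugmentation has \emph{local} kernel'' --- but you never actually bridge it. Both concrete constructions you offer fail or are unsubstantiated. The split extension $1\to H\to H\times G'\to G'\to 1$ does not help, because $L_\phi$ does not commute with products of (non-abelian) groups, so there is no reason the kernel of $H\times G'\to L_\phi(H\times G')$ should be a retract of anything local; and the proposed extension $1\to H\to G\to L_\phi H\to 1$ with ``$L_\phi G$ killing exactly $H$'' is precisely the object whose existence is to be proved --- you concede as much when you write that verifying its existence ``is where the real work lies.'' The transfinite descent sketch is not an argument. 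There is also a smaller slip at the start: for a general localization the coaugmentation $G\to L_\phi G$ need not be surjective, so it can be injective (trivial, hence local, kernel) without being an isomorphism; your reduction ``non-nullification $\Rightarrow$ some kernel non-acyclic'' silently skips this possibility, which the paper must (and does) treat as a separate, easy case.

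The two tools you are missing are exactly the ones the paper uses. First, factor the coaugmentation through the universal epireflection, $G\twoheadrightarrow L_{\mathcal E(\phi)}G\hookrightarrow L_\phi G$ (Rodr\'\i guez--Scevenels). If $L_{\mathcal E(\phi)}$ is a nullification while $L_\phi$ is not, some $\mathcal E(\phi)$-local $G$ has $G\to L_\phi G$ injective but not an isomorphism, and the trivial kernel is local --- done. Otherwise $L_{\mathcal E(\phi)}$ is a surjective localization that is not a nullification, so some kernel $N=\ker(G\to L_{\mathcal E(\phi)}G)$ is non-acyclic. Second, apply \emph{fiberwise localization} (Casacuberta--Descheemaeker) to $1\to N\to G\to L_{\mathcal E(\phi)}G\to 1$: this produces $\overline G$ with kernel $L_\phi N$ (local and non-trivial, since $N$ is non-acyclic) together with an $L_\phi$-equivalence $G\to\overline G$, so that $\overline G\to L_\phi\overline G=L_\phi G$ is a non-identity coaugmentation with local kernel. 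This is exactly the ``localize the kernel'' move you considered and rejected as subtle; fiberwise localization is the device that controls the interaction between the ambient localization and the kernel, and without citing or reconstructing it your argument does not close.
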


\begin{proof}
Let $L_{\mathcal E (\phi)}$ be the universal epireflection associated to $L_\phi$, \cite[Theorem~8]{MR1758736},
which means that the localization morphism $G \rightarrow L_\phi G$ factors as
\[
G \twoheadrightarrow L_{\mathcal E (\phi)} G \hookrightarrow L_\phi G
\]
As $L_\phi$ is not a nullification functor by assumption, we have to deal with two cases. In the first one,
the epireflection is a nullification, and then there exists a ${\mathcal E (\phi)}$-local group $G$ such that
$G \rightarrow L_\phi G$ is injective, hence has a local kernel. In the second one the epireflection is not
a nullification, i.e. there exists a group $G$ such that the kernel of $G \twoheadrightarrow L_{\mathcal E (\phi)} G$
is not acyclic, \cite{MR1676617}. Fiberwise localization then yields a morphism $\overline{G} \to L_\phi G$ with
(non-trivial) local kernel.
\end{proof}

The previous lemma justifies the choice of $G$ in the following proposition.

\begin{proposition}
\label{prop:counterexample}
Let $f: A \to B$ be a group homomorphism and let $L = L_f$. Assume that there exist a non-identity localization morphism
$G \to LG$ with local kernel and a surjection $E \to LG$ from a local group $E$ such that $\Hom A E = \{ 1\} = \Hom B E$.
Then the pull-back $P$ of the diagram $E \to LG \leftarrow G$ is local. In particular $P \to E$ is not the localization morphism
and $L$ is not conditionally flat.
\end{proposition}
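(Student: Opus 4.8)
The plan is to exhibit $P$ as a homotopy-pullback-style object whose localization is itself, by checking it is already $L$-local. The key point is that $P$ sits in a pullback square of groups
\[
\xymatrix{
P \ar[r] \ar[d] & G \ar[d]^{\eta_G} \\
E \ar[r] & LG
}
\]
and, since $\eta_G$ is surjective, so is $P \to E$; moreover the kernel of $P \to E$ is canonically isomorphic to the kernel $K = \ker(\eta_G)$, which is $L$-local by hypothesis. Thus $P$ is an extension $1 \to K \to P \to E \to 1$ with both $K$ and $E$ being $L$-local. First I would record that it therefore suffices to show that an extension of $L$-local groups by an $L$-local group is again $L$-local; this is where the hypotheses $\Hom A E = \{1\} = \Hom B E$ will be used, because being $L_f$-local for a group means precisely that the map $\Hom{B}{-} \to \Hom{A}{-}$ induced by $f$ is a bijection, and I want to propagate this along the extension.

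Concretely, to see $P$ is $L_f$-local I would take any homomorphism $a \colon A \to P$ and show it extends uniquely along $f \colon A \to B$ to a homomorphism $B \to P$. Composing $a$ with $P \to E$ gives a homomorphism $A \to E$, which is trivial by the assumption $\Hom A E = \{1\}$; hence $a$ factors through the kernel $K$, i.e. $a$ is (the corestriction of) a homomorphism $A \to K$. Since $K$ is $L$-local, $a$ extends uniquely to $b \colon B \to K \hookrightarrow P$. For uniqueness of the extension as a map into $P$: if $b, b' \colon B \to P$ both restrict to $a$ along $f$, then $B \to P \to E$ restricts to the trivial map $A \to E$, so by $\Hom B E = \{1\}$ both composites $B \to P \to E$ are trivial, whence $b, b'$ both land in $K$; uniqueness in $K$ then forces $b = b'$. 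This shows $P$ is $L_f$-local. (One should be slightly careful that the correct universal property characterizing $L$-local objects for a localization functor $L_f$ in the category of groups is the one just used; this is standard, see \cite{Dror}, and the same remark about lifting maps out of $A$ along $f$ is exactly what makes kernels of localization morphisms detect locality.)

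Once $P$ is known to be $L$-local, the localization morphism $\eta_P \colon P \to LP$ is an isomorphism, so in particular the natural map $P \to E$ cannot be $\eta_P$ unless $E \simeq P$; but $E \to LG$ is a surjection from a local group while $G \to LG$ is a non-identity localization morphism with local kernel $K$, and the conclusion one extracts is that the pullback extension $K \to P \to E$ is \emph{not} preserved by $L$. Indeed, applying $L$ to $K \to P \to E$ gives back $K \to P \to E$ itself (all three are local), whereas the statement that $L$ is conditionally flat would require, after pulling back the $L$-flat extension $K \to E \to LG$ along $\eta_G$, that $LP \to LE$, i.e. $P \to E$, have kernel $LK = K$ and be surjective with the pullback square being a pullback after localization — which it visibly is in a trivial way, so the failure must be phrased at the level of comparing with the un-localized pullback. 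The cleanest formulation: $P \to E$ has local total space, so it equals its own localization, and therefore the original extension $K \to P \to G$ over the non-local base $G$ cannot be $L$-flat in the sense that $LP \to LG$ need not recover it; spelling out this last contradiction with the definition of conditional flatness is the main obstacle, and I expect it to require care about which base ($G$ versus $LG$) one pulls back over and about matching the hypotheses of Proposition~\ref{prop:localgroup} / Lemma~\ref{lem:localizationmap}. I would resolve it by invoking Lemma~\ref{lem:localizationmap}'s group analogue: conditional flatness would force the pullback of $K \to E \to LG$ along $\eta_G$ to be $L$-flat, and $L$-flatness of $K \to P \to G$ combined with $P$ being local forces $LG \simeq G$ via the five-term sequence, contradicting that $\eta_G$ is not the identity.
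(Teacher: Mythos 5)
Your proof of the main assertion, that $P$ is local, is correct and follows essentially the paper's route: a homomorphism from $A$ (or from $B$) into the pullback has trivial component in $E$ by hypothesis, hence lands in $\ker(P\to E)\cong \ker(\eta_G)=K$, and locality of $K$ gives existence and uniqueness of the extension along $f$. One premise you use is false in general, though harmless here: you assert that $\eta_G$ is surjective and hence that $1\to K\to P\to E\to 1$ is an extension. Localization morphisms of groups need not be surjective --- in Example~\ref{ex:assaf} the morphism $A_n\to A_{n+1}$ is injective and not surjective, so $P\to E$ is not onto there. Your argument survives because it only uses the identification $\ker(P\to E)\cong K$ and the factorization of maps with trivial image in $E$ through that kernel, neither of which needs surjectivity; but the framing ``extension of local by local'' should be dropped.

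The genuine gap is in the ``in particular'' clause, where you conflate two different kernels. Writing $N=\ker(E\to LG)$, the extension being pulled back is $1\to N\to E\to LG\to 1$ and its pullback along $\eta_G$ is $1\to N\to P\to G\to 1$; one has $\ker(P\to G)=N$ and $\ker(P\to E)=K$, so the sequences ``$K\to E\to LG$'' and ``$K\to P\to G$'' appearing in your last paragraph are not the relevant ones, and the appeal to the five-term homology sequence is not the right tool. The intended argument, which explains the phrase ``$P\to E$ is not the localization morphism,'' is this: if the pullback extension $1\to N\to P\to G\to 1$ were $L$-flat, then $1\to LN\to LP\to LG\to 1$ would be exact; since $N$ is local (needed anyway for the original extension to be $L$-flat, and true in both applications) and $P$ is local, the canonical comparison $P=LP\to E$ would be a map of extensions of $LG$ by $N$ inducing the identity on $N$ and on $LG$, hence an isomorphism by the five lemma --- that is, $P\to E$ would have to be the localization morphism of $P$. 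But $P\to E$ has kernel $K$ and is surjective only when $\eta_G$ is, so it is an isomorphism only if $\eta_G$ is, contradicting the hypothesis that $\eta_G$ is not the identity. Your observation that $\eta_P$ is an isomorphism while $P\to E$ is not is the right starting point; what is missing is the identification of $P\to E$ with the localized comparison map of the pullback extension, which is what turns that observation into the failure of conditional flatness.
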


\begin{proof}
Any morphism from $A$, respectively $B$, to $P$ is given by a pair of
compatible morphisms to $G$ and $E$. By assumption the morphism to $E$ is trivial so that the morphism to $G$
must factorize through the kernel of the localization, which is local. Therefore $\Hom A P = \{1\} = \Hom B P$.
\end{proof}

This construction helps to find many localization functors which are not conditionally flat. The first occurence
of such a localization was the epireflection associated to a quasi-variety encountered in the proof of
Theorem~\ref{thm:epireflection}.

\begin{example}
\label{ex:assaf}
{\rm Let $f: A_n \hookrightarrow A_{n+1}$ be Libman's localization morphism in \cite{MR1760593} for $n\geq 7$. Pick
a (free) presentation $F_1 \to F_0 \to A_{n+1}$. Any free group is obviously $f$-local, so that the proposition applies
and the pull-back of $A_n \to A_{n+1} \leftarrow F_1$ is also $f$-local.
}
\end{example}

\subsection{What about non-localization functors?}
If we consider a functor which behaves well with push-outs and more generally colimits, and (therefore)
badly with respect to extensions and pull-backs, we will see that it has very little chance to be conditionally flat. For any group $G$ we write $S_p(G) \subset G$ for the subgroup generated by its elements of order $p$.

\begin{proposition}
\label{prop:counter}
The functor $S_p$ is not conditionally flat.
\end{proposition}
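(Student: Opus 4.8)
The plan is to find an explicit $L$-flat group extension whose pullback along some homomorphism fails to be $L$-flat for $L = S_p$. First I would recall that $S_p$ is a subfunctor of the identity sending $G$ to the subgroup generated by its order-$p$ elements; it is far from idempotent and, being generated by images of maps from $\Z/p$, it commutes with colimits but behaves badly with respect to extensions. So I want an extension $1 \to K \to E \to G \to 1$ for which the sequence $S_p K \to S_p E \to S_p G$ happens to be a (short) exact sequence --- ideally one where all three terms are trivial, or where the extension is itself a product, making $S_p$-flatness automatic --- and then pull back along a map $X \to G$ so that in the pullback $1 \to K \to P \to X \to 1$ the subgroup $S_p P$ ceases to surject onto $S_p X$ or ceases to have $S_p K$ as its intersection with $K$.

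The cleanest realization I would try: take $p = 2$ and mimic the mechanism of Theorem~\ref{thm:epireflection} and Remark~\ref{rmk:quasivariety}. Consider the extension $1 \to \Z \xrightarrow{\cdot 2} \Z \to \Z/2 \to 1$. Here $S_2(\Z) = 1$ (torsion-free), $S_2(\Z/2) = \Z/2$, and $S_2$ of the middle group $\Z$ is $1$; the sequence $1 \to 1 \to 1 \to \Z/2$ is certainly \emph{not} exact, so this particular extension is not $S_2$-flat and is useless as stated. Instead I would start from an extension that \emph{is} $S_p$-flat by triviality of the flanks: for instance a central extension $1 \to \Z \to E \to \Z/p \to 1$ with $E \cong \Z$ (so $S_p E = 1$) is still not $S_p$-flat for the same reason. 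The honest move is therefore to pick an extension where $S_p G$ is also trivial, e.g. $1 \to K \to E \to G \to 1$ with $G$ torsion-free or $p$-torsion-free, or a split extension $E = K \times G$ where $S_p$ visibly preserves the product; then pull back along a homomorphism $H \to G$ from a group $H$ with $S_p H \neq 1$ and arrange that $S_p P$ fails to hit $S_p H$. Concretely I would try $E = K \times \Z$ with a suitable $K$, map $\Z/p^2 \to \Z/p = G$... and I would iterate small finite examples until the bookkeeping $S_p(K) = S_p(P) \cap K$ and $S_p(P) \twoheadrightarrow S_p(H)$ breaks in exactly one spot.

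The main obstacle, and the step I expect to absorb most of the work, is bookkeeping: computing $S_p$ on the total space and on the pullback and verifying that the comparison map $S_p K \to \ker(S_p P \to S_p X)$ is not an isomorphism --- i.e. pinning down which order-$p$ elements of the pullback actually exist. Unlike a localization functor, $S_p$ gives no universal property to lean on, so these have to be checked by hand on an explicit small group; the art is choosing the example small enough that this is a finite check yet structured enough that flatness genuinely holds upstairs. Once such an example is in hand the conclusion is immediate: the pullback extension is not $S_p$-flat while the original is, so $S_p$ is not conditionally flat.
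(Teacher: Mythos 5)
There is a genuine gap: you never actually produce the counterexample, and a proof of this proposition \emph{is} a counterexample. Your text correctly identifies the shape of the argument (exhibit an $S_p$-flat extension with a non-$S_p$-flat pullback) and correctly discards a few non-starters, but it ends with ``I would iterate small finite examples until the bookkeeping breaks,'' which is a search plan rather than a proof. Worse, the one concrete route you commit to --- starting from a direct product $E = K \times G$, ``where $S_p$ visibly preserves the product'' --- provably cannot work: the pullback of $K \times G \to G$ along any $H \to G$ is $K \times H \to H$, again a product projection, and since $S_p(A \times B) = S_p(A) \times S_p(B)$ (an element of order $p$ in $A\times B$ has both coordinates of order dividing $p$), every such pullback is automatically $S_p$-flat. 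So the flatness of the initial extension must come from a less trivial mechanism than splitness of the flanks.

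The paper's example supplies exactly the missing ingredient. Take $p=2$ and the (non-split) central extension $\Z/2 \to D_8 \to \Z/2 \times \Z/2$ with $D_8 = \langle x,y \mid x^4 = y^2 = 1 = yxyx\rangle$ and kernel the center $\langle x^2\rangle$. This is $S_2$-flat not because the terms are torsion-free or the extension splits, but because $D_8$ is generated by the order-two elements $y$ and $yx$, so $S_2$ returns each of the three groups unchanged. Pulling back along the inclusion of the copy of $\Z/2$ generated by the image of $x$ gives $\Z/2 \to \Z/4 \to \Z/2$, and $S_2(\Z/4) = \langle x^2 \rangle$ maps trivially to the quotient, so exactness fails on the right. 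If you want to salvage your write-up, replace the product ansatz by this example (or any extension whose total group is generated by its $p$-torsion while some subquotient of the base pulls back to a cyclic group of order $p^2$); the ``bookkeeping'' you were worried about then reduces to the two-line computation of $S_2$ on $D_8$ and on $\Z/4$.
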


\begin{proof}
We exhibit a counter-example: Set $p=2$ and consider the (central) extension $\Z/2 \to D_8 \to \Z/2 \times \Z/2$
where $D_8$ is the dihedral group of order $8$, given by the standard presentation $<x, y\, | \, x^4=y^2 = 1 = yxyx >$. This extension
is $S_2$-flat since $D_8$ is generated by $y$ and $yx$, both elements of order $2$. However if we pick in the
base $\Z/2 \times \Z/2$ the copy of $\Z/2$ generated by the image of $x$ and pull the extension back along this inclusion,
we get the extension $\Z/2 \to \Z/4 \to \Z/2$ which is not $S_2$-flat.
\end{proof}

In fact we could also have chosen the analogous property defined by replacing the subgroup $S_p(-)$ by $\mathbf Z/p$-cellularization.
The class of $\mathbf Z/p$-cellular groups is closed under colimits and the question we ask deals with extensions and pull-backs.
This is why we should not expect them to behave well together. One should maybe rather ask the dual question about
the interplay of push-outs and cellularization.

\subsection{Open questions}
We know now that general group localization functors do not behave as nicely as one could expect with respect
to pulling back extensions, not even for abelian groups! Nullifications and epireflections coming from group varieties
are the only one we know of that behave well. We have not dealt with localization functors $L$ for which $G \to LG$ is
not surjective, such as localization at a set of primes.

\medskip

\noindent
{\bf Question A.}  Are there conditionally flat localization functors which are not eprireflections?

\medskip

Notice that rationalization, and localization at a set of primes, are exact functors in the category of abelian
groups. They are therefore flat, hence conditionally flat in the category of abelian groups.

\medskip

\noindent
{\bf Question B.} Is rationalization, i.e. localization with respect to multiplication by $p$ on the integers for all prime numbers $p$,
conditionally flat in the category of groups?

\medskip

By moving from the category of spaces to that of groups, we found more conditionally flat localization functors. By restricting
even more to a strict subcategory of groups, the class of conditionally flat functors will increase. 

\medskip

\noindent
{\bf Question C.} What does conditional flatness mean in a full subcategory of groups, such as abelian or nilpotent groups?



\bibliographystyle{amsplain}
\def\cprime{$'$}
\providecommand{\bysame}{\leavevmode\hbox to3em{\hrulefill}\thinspace}
\providecommand{\MR}{\relax\ifhmode\unskip\space\fi MR }
\providecommand{\MRhref}[2]{%
  \href{http://www.ams.org/mathscinet-getitem?mr=#1}{#2}
}
\providecommand{\href}[2]{#2}



\end{document}